\newfont{\footsc}{cmcsc10 at 8truept}
\newfont{\footbf}{cmbx10 at 8truept}  
\newfont{\footrm}{cmr10 at 10truept} 
\newtheorem{theorem}{Theorem}
\newtheorem{Cor}{Corollary}
\newcommand{\qed}{\hfill \rule{0.7ex}{1.5ex}}
\newenvironment{proof}{\begin{trivlist} \item[\hskip \labelsep{\it
Proof.}]\setlength{\parindent}{0pt}}{\end{trivlist}}
\title{$q$-poly-Bernoulli numbers and $q$-poly-Cauchy numbers with a parameter by Jackson's integrals}
\author{Takao Komatsu
\\   
\small School of Mathematics and Statistics\\[-0.8ex]
\small Wuhan University, Wuhan, 430072, China\\[-0.8ex]
\small \texttt{komatsu@whu.edu.cn}
}
\date{
\small MR Subject Classifications: 05A15, 05A19, 11B75, 33D45}
\begin{document}
\maketitle

\begin{abstract}
We define $q$-poly-Bernoulli polynomials $B_{n,\rho,q}^{(k)}(z)$ with a parameter $\rho$, $q$-poly-Cauchy polynomials of the first kind $c_{n,\rho,q}^{(k)}(z)$ and of the second kind $\widehat c_{n,\rho,q}^{(k)}(z)$ with a parameter $\rho$  by Jackson's integrals, which generalize the previously known numbers and polynomials, including poly-Bernoulli numbers $B_n^{(k)}$ and the poly-Cauchy numbers of the first kind $c_n^{(k)}$ and of the second kind $\widehat c_n^{(k)}$. We investigate their properties connected with usual Stirling numbers and weighted Stirling numbers. We also give the relations between generalized poly-Bernoulli polynomials and two kinds of generalized poly-Cauchy polynomials.
\end{abstract}

\section{Introduction}

Let $n$ and $k$ be integers with $n\ge 0$, and let $\rho$ be a real number parameter\footnote{$q$ was used as a parameter in \cite{Ko2,KC}, but in this paper we use $\rho$ in order to avoid confusions with $q$-integral.} with $\rho\ne 0$. Let $q$ be a real number with $0\le q<1$.  
Define $q$-poly-Bernoulli polynomials $B_{n,\rho,q}^{(k)}(z)$ with a parameter $\rho$ by 
\begin{equation} 
\frac{\rho}{1-e^{-\rho t}}{\rm Li}_{k,q}\left(\frac{1-e^{-\rho t}}{\rho}\right)e^{-t z}=\sum_{n=0}^\infty B_{n,\rho,q}^{(k)}(z)\frac{t^n}{n!}\,,
\label{def:qpb} 
\end{equation} 
where ${\rm Li}_{k,q}(z)$ is the $q$-polylogarithm function (\cite{Katsurada}) defined by 
$$
{\rm Li}_{k,q}(z)=\sum_{n=1}^\infty\frac{z^n}{[n]_q^k}\,. 
$$ 
Here, 
$$
[x]_q=\frac{1-q^x}{1-q} 
$$ 
is the $q$-number with $[0]_q=0$ (see e.g. \cite[(10.2.3)]{AAR}, \cite{Ja}).  Note that $\lim_{q\to 1}[x]_q=x$. 

Notice that 
$$
\lim_{q\to 1}B_{n,\rho,q}^{(k)}(z)=B_{n,\rho}^{(k)}(z)\,,
$$ 
which is the poly-Bernoulli polynomial with a $\rho$ parameter (\cite{KC}), and 
$$
\lim_{q\to 1}{\rm Li}_{k,q}(z)={\rm Li}_k(z)\,,
$$
which is the ordinary polylogarithm function, defined by 
\begin{equation} 
{\rm Li}_k(z)=\sum_{m=1}^\infty\frac{z^m}{m^k}\,. 
\label{polylog}
\end{equation}    
In addition, when $z=0$, $B_{n,\rho}^{(k)}(0)=B_{n,\rho}^{(k)}$ is the poly-Bernoulli number with a $\rho$ parameter. When $z=0$ and $\rho=1$, $B_{n,1}^{(k)}(0)=B_n^{(k)}$ is the poly-Bernoulli number (\cite{Kaneko}) defined by 
\begin{equation} 
\frac{{\rm Li}_k(1-e^{-t})}{1-e^{-t}}=\sum_{n=0}^\infty B_n^{(k)}\frac{t^n}{n!}\,,
\label{Kaneko} 
\end{equation} 
The poly-Bernoulli numbers are extended to the poly-Bernoulli polynomials (\cite{BH,CC}) and to the special multi-poly-Bernoulli numbers (\cite{HM}).

Let $n$ and $k$ be integers with $n\ge 0$, and let $\rho$ be a real number parameter with $\rho\ne 0$. Let $q$ be a real number with $0\le q<1$. Define $q$-poly-Cauchy polynomials of the first kind $c_{n,\rho,q}^{(k)}(z)$ with a parameter $\rho$ by 
\begin{align}  
c_{n,\rho,q}^{(k)}(z)&=\rho^n\underbrace{\int_0^1\cdots\int_0^1}_k\left(\frac{x_1\cdots x_k-z}{\rho}\right)_n d_q x_1\cdots d_q x_k\notag\\
&=\rho^n n!\underbrace{\int_0^1\cdots\int_0^1}_k\binom{(x_1\cdots x_k-z)/\rho}{n}d_q x_1\cdots d_q x_k\,, 
\label{def:qpc1}
\end{align}
where $(x)_n=x(x-1)\cdots(x-n+1)$ ($n\ge 1$) with $(x)_0=1$. 

Jackson's $q$-derivative with $0<q<1$ (see e.g. \cite[(10.2.3)]{AAR}, \cite{Ja}) is defined by 
$$
D_q f=\frac{d_q f}{d_q x}=\frac{f(x)-f(q x)}{(1-q)x}
$$ 
and Jackson's $q$-integral (\cite[(10.1.3)]{AAR}, \cite{Ja}) is defined by 
$$
\int_0^x f(t)d_q t=(1-q)x\sum_{n=0}^\infty f(q^n x)q^n\,. 
$$ 
For example, when $f(x)=x^m$ for some nonnegative integer $m$,
\begin{align*}  
D_q f&=\frac{x^m-q^m x^m}{(1-q)x}\\
&=[m]_q x^{m-1}
\end{align*}  
and 
\begin{align*} 
\int_0^x t^m d_q t&=(1-q)x\sum_{n=0}^\infty q^{mn}x^m q^n\\
&=(1-q)x^{m+1}\sum_{n=0}^\infty q^{n(m+1)}\\
&=\frac{x^{m+1}}{[m+1]_q}\,. 
\end{align*} 

Notice that 
$$
\lim_{q\to 1}c_{n,\rho,q}^{(k)}(z)=c_{n,\rho}^{(k)}(z)\,, 
$$ 
which is the poly-Cauchy polynomial with a $\rho$ parameter (\cite{Ko2}). In addition, when $z=0$, $c_{n,\rho}^{(k)}(0)=c_{n,\rho}^{(k)}$ is the poly-Cauchy number with a $\rho$ parameter (\cite{Ko2}). When $z=0$ and $\rho=1$, $c_{n,1}(0)=c_n^{(k)}$ is the poly-Cauchy number (of the first kind) (\cite{Ko1}), defined by the integral of the falling factorial: 
\begin{align*}
c_n^{(k)}&=\underbrace{\int_0^1\cdots\int_0^1}_k(x_1\cdots x_k)_n dx_1\dots dx_k\\
&=n!\underbrace{\int_0^1\cdots\int_0^1}_k\binom{x_1\cdots x_k}{n}dx_1\dots dx_k\,.
\end{align*}  
If $k=1$, then $c_n^{(1)}=c_n$ is the classical Cauchy number (\cite{C,MSV}).  The number $c_n/n!$ is sometimes referred to as the Bernoulli number of the second kind (\cite{Ca1,Jo,No}). 
The poly-Cauchy numbers of the first kind $c_n^{(k)}$ can be expressed in terms of the Stirling numbers of the first kind.  
$$
c_n^{(k)}=\sum_{m=0}^n\frac{(-1)^{n-m}S_1(n,m)}{(m+1)^k}\quad(n\ge 0,~k\ge 1) 
$$
(\cite[Theorem 1]{Ko1}), where $S_1(n,m)$ is the (unsigned) Stirling number of the first kind, see \cite{C}, determined by the rising factorial: 
\begin{equation}  
x(x+1)\cdots(x+n-1)=\sum_{m=0}^n S_1(n,m)x^m\,. 
\label{def:signedst1}
\end{equation}    
The generating function of the poly-Cauchy numbers $c_n^{(k)}$ is given by 
\begin{equation} 
{\rm Lif}_k\bigl(\ln(1+t)\bigr)=\sum_{n=0}^\infty c_n^{(k)}\frac{t^n}{n!}  
\label{polycauchy}
\end{equation}  
(\cite[Theorem 2]{Ko1}), where ${\rm Lif}_k(z)$ is called {\it polylogarithm factorial} function (or simply, {\it polyfactorial} function) defined by 
\begin{equation} 
{\rm Lif}_k(z)=\sum_{m=0}^\infty\frac{z^m}{m!(m+1)^k}\,. 
\label{polyfac}
\end{equation}   
By this definition, $k$ is not restricted to a positive integer in $c_n^{(k)}$.  
Similarly, define the poly-Cauchy numbers of the second kind $\widehat c_n^{(k)}$ (\cite{Ko1}) by 
\begin{align*}
\widehat c_n^{(k)}&=\underbrace{\int_0^1\cdots\int_0^1}_k(-x_1\cdots x_k)_n dx_1\dots dx_k\\
&=n!\underbrace{\int_0^1\cdots\int_0^1}_k\binom{-x_1\cdots x_k}{n}dx_1\dots dx_k\,.
\end{align*}  
If $k=1$, then $\widehat c_n^{(1)}=\widehat c_n$ is the classical Cauchy number of the second kind (\cite{C,MSV}).  
The poly-Cauchy numbers of the second kind $\widehat c_n^{(k)}$ can be expressed in terms of the Stirling numbers of the first kind.  
$$
\widehat c_n^{(k)}=(-1)^n\sum_{m=0}^n\frac{S_1(n,m)}{(m+1)^k}\quad(n\ge 0,~k\ge 1) 
$$
(\cite[Theorem 4]{Ko1}). 
The generating function of the poly-Cauchy numbers of the second kind $\widehat c_n^{(k)}$ is given by 
\begin{equation} 
{\rm Lif}_k\bigl(-\ln(1+t)\bigr)=\sum_{n=0}^\infty\widehat c_n^{(k)}\frac{t^n}{n!}  
\label{polycauchy2}
\end{equation}  
(\cite[Theorem 5]{Ko1}). 

The poly-Cauchy numbers have been considered as analogues of poly-Bernoulli numbers $B_n^{(k)}$.  
The poly-Cauchy numbers (of the both kinds) are extended to the poly-Cauchy polynomials (\cite{KK}), and to the poly-Cauchy numbers with a $q$ parameter (\cite{Ko2}). The corresponding poly-Bernoulli numbers with a $q$ parameter can be obtained in \cite{KC}. A different direction of generalizations of Cauchy numbers is about Hypergeometric Cauchy numbers (\cite{Ko4}). Arithmetical and combinatorial properties including sums of products have been studied (\cite{Ko3,KLS,KL}). 

In this paper, by using Jackson's $q$-integrals, as essential generalizations of the previously known numbers, including poly-Bernoulli numbers $B_n^{(k)}$, the poly-Cauchy numbers of the first kind $c_n^{(k)}$ and of the second kind $\widehat c_n^{(k)}$, we introduce the concept about $q$-analogues or extensions of the poly-Bernoulli polynomials $B_{n,\rho,q}^{(k)}(z)$ with a parameter, the poly-Cauchy polynomials of the first kind $c_{n,\rho,q}^{(k)}$ and of the second kind $\widehat c_{n,\rho,q}^{(k)}$ with a parameter. We investigate their properties connected with usual Stirling numbers and weighted Stirling numbers. We also give the relations between generalize poly-Bernoulli polynomials and two kinds of generalized poly-Cauchy polynomials.


\section{$q$-poly-Bernoulli polynomials with a parameter} 

Carlitz \cite{Ca2} defined the weighted Stirling numbers of the first kind $S_1(n,m,x)$ and of the second kind $S_2(n,m,x)$ by  
\begin{equation} 
\frac{(1-t)^{-x}\bigl(-\ln(1-t)\bigr)^m}{m!}=\sum_{n=0}^\infty S_1(n,m,x)\frac{t^n}{n!}
\label{ws1}
\end{equation}  
and 
\begin{equation} 
\frac{e^{x t}(e^t-1)^m}{m!}=\sum_{n=0}^\infty S_2(n,m,x)\frac{t^n}{n!}\,, 
\label{ws2} 
\end{equation}  
respectively.   
Note that Carlitz \cite{Ca2} used the notation $R_1(n,m,x)$ and $R(n,m,x)$ instead of $S_1(n,m,x)$ and $S_2(n,m,x)$, respectively.  
When $x=0$, $S_1(n,m,0)=S_1(n,m)$ and $S_2(n,m,0)=S_2(n,m)$ are the (unsigned) Stirling number of the first kind and the Stirling number of the second kind, respectively.  

The $q$-poly-Bernoulli polynomials with a parameter $\rho$ can be expressed in terms of the weighted Stirling numbers of the second kind. 

\begin{theorem}  
We have 
\begin{equation}  
B_{n,\rho,q}^{(k)}(z)=\sum_{m=0}^n S_2\left(n,m,\frac{z}{\rho}\right)\frac{(-\rho)^{n-m}m!}{[m+1]_q^k}\,. 
\label{exp:qpb}
\end{equation}  
\label{thb1}
\end{theorem}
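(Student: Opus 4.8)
The plan is to insert the series for ${\rm Li}_{k,q}$ into the defining generating function \eqref{def:qpb}, simplify, and then recognize the resulting exponential factor through the generating function \eqref{ws2} of the weighted Stirling numbers of the second kind. First I would write
$$
\frac{\rho}{1-e^{-\rho t}}{\rm Li}_{k,q}\!\left(\frac{1-e^{-\rho t}}{\rho}\right)e^{-tz}
=\frac{\rho}{1-e^{-\rho t}}\sum_{j=1}^\infty\frac{1}{[j]_q^k}\left(\frac{1-e^{-\rho t}}{\rho}\right)^j e^{-tz}
=\sum_{m=0}^\infty\frac{1}{[m+1]_q^k}\left(\frac{1-e^{-\rho t}}{\rho}\right)^m e^{-tz}\,,
$$
where in the last step one of the factors $(1-e^{-\rho t})/\rho$ cancels the prefactor and one reindexes $m=j-1$.

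Next I would handle the factor $(1-e^{-\rho t})^m e^{-tz}$ by the substitution $u=-\rho t$. Since $1-e^{-\rho t}=-(e^{u}-1)$ and $e^{-tz}=e^{(z/\rho)u}$, we get $(1-e^{-\rho t})^m e^{-tz}=(-1)^m(e^u-1)^m e^{(z/\rho)u}$. Applying \eqref{ws2} with $x=z/\rho$ and then substituting back $u=-\rho t$ yields
$$
(1-e^{-\rho t})^m e^{-tz}=(-1)^m m!\sum_{n=0}^\infty S_2\!\left(n,m,\frac{z}{\rho}\right)\frac{(-\rho t)^n}{n!}\,.
$$
Plugging this into the sum over $m$ and combining the sign factors, $\dfrac{(-1)^m}{\rho^m}(-\rho)^n=(-1)^{n-m}\rho^{n-m}=(-\rho)^{n-m}$, gives
$$
\sum_{n=0}^\infty\left(\sum_{m=0}^\infty S_2\!\left(n,m,\frac{z}{\rho}\right)\frac{(-\rho)^{n-m}m!}{[m+1]_q^k}\right)\frac{t^n}{n!}\,.
$$

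Finally, since $S_2(n,m,z/\rho)=0$ for $m>n$, the inner sum is really $\sum_{m=0}^n$, and comparing the coefficient of $t^n/n!$ with \eqref{def:qpb} gives \eqref{exp:qpb}. The only point that needs a word of justification is the interchange of the two summations, which is harmless because for each fixed power of $t$ only finitely many terms contribute (alternatively, work entirely with formal power series); the bookkeeping of the signs and the change of variable $u=-\rho t$ is the main thing to get right, but it is routine.
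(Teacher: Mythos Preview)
Your argument is correct and follows essentially the same route as the paper's proof: expand ${\rm Li}_{k,q}$ as a series, cancel one factor of $(1-e^{-\rho t})/\rho$ and reindex, then use the generating function \eqref{ws2} of the weighted Stirling numbers of the second kind with $-\rho t$ in place of $t$ (your substitution $u=-\rho t$ just makes this explicit) and compare coefficients. The paper records the calculation more tersely, but the idea is identical.
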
  

\begin{proof} 
From (\ref{def:qpb}), and using (\ref{ws2}), we have 
\begin{align*}  
\sum_{n=0}^\infty B_{n,\rho,q}^{(k)}(z)\frac{t^n}{n!}&=\sum_{m=0}^\infty\frac{(-\rho)^{-m}}{[m+1]_q^k}e^{-\rho t(z/\rho)}(e^{-\rho t}-1)^m\\
&=\sum_{m=0}^\infty\frac{(-\rho)^{-m}m!}{[m+1]_q^k}\sum_{n=m}^\infty S_2\left(n,m,\frac{z}{\rho}\right)\frac{(-\rho t)^n}{n!}\\
&=\sum_{n=0}^\infty\left(\sum_{m=0}^\infty S_2\left(n,m,\frac{z}{\rho}\right)\frac{(-\rho)^{n-m}m!}{[m+1]_q^k}\right)\frac{t^n}{n!}\,. 
\end{align*} 
Comparing the coefficients on both sides, we get the result. Notice that $S_2(n,m,x)=0$ for $n<m$. 
\qed\end{proof}

\begin{Cor}  
For $q$-poly-Bernoulli numbers with a parameter $\rho$, we have 
\begin{equation} 
B_{n,\rho,q}^{(k)}=\sum_{m=0}^n S_2(n,m)\frac{(-\rho)^{n-m}m!}{[m+1]_q^k}\,. 
\label{exp:qpbn}
\end{equation}  
\end{Cor}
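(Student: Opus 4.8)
The plan is to obtain this Corollary as the immediate specialization $z=0$ of Theorem~\ref{thb1}. First I would recall that, by the definition given just after~(\ref{def:qpb}), setting $z=0$ in the generating function~(\ref{def:qpb}) yields $B_{n,\rho,q}^{(k)}(0)=B_{n,\rho,q}^{(k)}$, the $q$-poly-Bernoulli number with parameter $\rho$. So it suffices to evaluate the right-hand side of~(\ref{exp:qpb}) at $z=0$.

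Next I would use the fact, noted right after~(\ref{ws2}), that the weighted Stirling numbers of the second kind reduce to the ordinary Stirling numbers of the second kind when the weight parameter vanishes: $S_2(n,m,0)=S_2(n,m)$. Substituting $z=0$ into~(\ref{exp:qpb}) gives $z/\rho=0$, hence $S_2(n,m,z/\rho)=S_2(n,m,0)=S_2(n,m)$, and the formula becomes
\[
B_{n,\rho,q}^{(k)}=\sum_{m=0}^n S_2(n,m)\frac{(-\rho)^{n-m}m!}{[m+1]_q^k}\,,
\]
which is exactly~(\ref{exp:qpbn}).

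There is essentially no obstacle here: the entire content is a substitution plus the boundary identity $S_2(n,m,0)=S_2(n,m)$, which is already recorded in the text. If one wished to make the argument self-contained rather than quoting Theorem~\ref{thb1}, the only point worth a word is that the interchange of summations in the proof of that theorem is valid as an identity of formal power series (or, if one prefers analytic convergence, for $t$ small and $0\le q<1$), but since the Corollary is stated as a direct consequence, I would simply write: set $z=0$ in~(\ref{exp:qpb}) and use $S_2(n,m,0)=S_2(n,m)$.
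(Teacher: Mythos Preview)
Your proposal is correct and matches the paper's approach: the Corollary is stated immediately after Theorem~\ref{thb1} with no separate proof, so it is meant to be the direct specialization $z=0$ of~(\ref{exp:qpb}) together with $S_2(n,m,0)=S_2(n,m)$, exactly as you describe.
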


\section{$q$-poly-Cauchy polynomials of the first kind with a parameter} 

The $q$-poly-Cauchy polynomials of the first kind $c_{n,\rho,q}^{(k)}(z)$ with a parameter can be expressed in terms of the weighted Stirling numbers of the first kind $S_1(n,m,x)$.  In this expression, $k$ is not restricted to a positive integer. 

\begin{theorem}  
For integers $n$ and $k$ with $n\ge 0$, we have 
\begin{align} 
c_{n,\rho,q}^{(k)}(z)&=\sum_{m=0}^n S_1(n,m)(-\rho)^{n-m}\sum_{i=0}^m\binom{m}{i}\frac{(-z)^i}{[m-i+1]_q^k}\notag\\
&=\sum_{m=0}^n S_1\left(n,m,\frac{z}{\rho}\right)\frac{(-\rho)^{n-m}}{[m+1]_q^k}\,.  
\label{exp:qpc1}
\end{align}  
\label{thc1} 
\end{theorem}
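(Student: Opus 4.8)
The plan is to expand the falling factorial appearing in the Jackson integral (\ref{def:qpc1}) in terms of Stirling numbers of the first kind, integrate the resulting monomials termwise using the $q$-integral evaluation $\int_0^1 x^r\,d_q x=1/[r+1]_q$ recalled above, and then recognize the coefficients as weighted Stirling numbers.

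Concretely, I would first record the two expansions I need. From the rising-factorial definition (\ref{def:signedst1}) of $S_1(n,m)$ one gets the falling-factorial identity $(w)_n=\sum_{m=0}^n(-1)^{n-m}S_1(n,m)w^m$, valid for every $w$. From Carlitz's generating function (\ref{ws1}) one gets the weighted analogue
\[
(y-x)_n=\sum_{m=0}^n(-1)^{n-m}S_1(n,m,x)\,y^m,
\qquad\text{equivalently}\qquad
S_1(n,m,x)=\sum_{l=m}^n\binom{l}{m}S_1(n,l)\,x^{l-m};
\]
the first of these follows by replacing $t$ by $-t$ in (\ref{ws1}), multiplying the $m$-th identity by $(-y)^m$ and summing over $m$, whereupon the right-hand side collapses through $e^{y\ln(1+t)}=(1+t)^y$ to $(1+t)^{y-x}=\sum_{n\ge 0}(y-x)_n t^n/n!$. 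Only one of the two forms is really needed, since they are equivalent by reindexing, but having both makes the two displayed formulas in (\ref{exp:qpc1}) transparent.

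For the first formula I would put $w=(x_1\cdots x_k-z)/\rho$ in $(w)_n=\sum_m(-1)^{n-m}S_1(n,m)w^m$ inside (\ref{def:qpc1}); since the $m$-sum is finite it passes through the $k$-fold Jackson integral freely. Expanding $w^m=\rho^{-m}\sum_{i=0}^m\binom{m}{i}(x_1\cdots x_k)^{m-i}(-z)^i$ and using $\int_0^1\!\cdots\!\int_0^1(x_1\cdots x_k)^r\,d_q x_1\cdots d_q x_k=\bigl(\int_0^1 x^r\,d_q x\bigr)^k=[r+1]_q^{-k}$, while combining the prefactor $\rho^n$ with $\rho^{-m}$ into $(-\rho)^{n-m}$, produces exactly the first line of (\ref{exp:qpc1}). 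For the second formula I would instead substitute $y=(x_1\cdots x_k)/\rho$ and $x=z/\rho$ into $(y-x)_n=\sum_m(-1)^{n-m}S_1(n,m,x)y^m$, so that the integrand becomes $\sum_m(-1)^{n-m}S_1(n,m,z/\rho)\rho^{-m}(x_1\cdots x_k)^m$, and integrate termwise exactly as before; alternatively, one sets $i=m-j$ in the first line, swaps the order of summation, and applies $S_1(n,m,x)=\sum_{l\ge m}\binom{l}{m}S_1(n,l)x^{l-m}$. The only step that is not purely mechanical is deriving the weighted-Stirling identity from (\ref{ws1}); everything else is termwise integration of monomials, and there are no convergence subtleties since every outer sum is finite.
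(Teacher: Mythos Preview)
Your proof of the first identity is exactly the paper's argument: expand the falling factorial via $(w)_n=\sum_m(-1)^{n-m}S_1(n,m)w^m$, apply the binomial theorem to $(x_1\cdots x_k-z)^m$, and integrate each monomial with $\int_0^1 x^r\,d_q x=1/[r+1]_q$.

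For the second identity the paper takes a slightly different route: it starts from the first identity, reindexes the double sum, and then invokes Carlitz's explicit formula $S_1(n,m,x)=\sum_i\binom{m+i}{i}x^iS_1(n,m+i)$ (which is your displayed identity $S_1(n,m,x)=\sum_{l\ge m}\binom{l}{m}S_1(n,l)x^{l-m}$ after the substitution $l=m+i$). Your primary suggestion---deriving $(y-x)_n=\sum_m(-1)^{n-m}S_1(n,m,x)y^m$ directly from the generating function (\ref{ws1}) and then integrating---is a cleaner and more self-contained alternative that avoids citing an external identity; it also makes the parallel with the unweighted case transparent. Your stated alternative (reindex and apply the explicit formula) is precisely what the paper does. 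Either way the argument is correct.
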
   

\begin{proof} 
From (\ref{def:qpc1}) and (\ref{def:signedst1}), we have 
\begin{align*}  
c_{n,\rho,q}^{(k)}(z)&=\rho^n\sum_{m=0}^n(-1)^{n-m}S_1(n,m)\underbrace{\int_0^1\cdots\int_0^1}_k\left(\frac{x_1\cdots x_k-z}{\rho}\right)^m d_q x_1\cdots d_q x_k\\
&=\sum_{m=0}^n(-\rho)^{n-m}S_1(n,m)\sum_{i=0}^m\binom{m}{i}(-z)^{m-i}\underbrace{\int_0^1\cdots\int_0^1}_k x_1^i\cdots x_k^i d_q x_1\cdots d_q x_k\\
&=\sum_{m=0}^n(-\rho)^{n-m}S_1(n,m)\sum_{i=0}^m\binom{m}{i}\frac{(-z)^{m-i}}{[i+1]_q^k}\\
&=\sum_{m=0}^n(-\rho)^{n-m}S_1(n,m)\sum_{i=0}^m\binom{m}{i}\frac{(-z)^{i}}{[m-i+1]_q^k}\,. 
\end{align*} 
By using the relation, see \cite[Eq. (5.2)]{Ca2}, 
$$
S_1(n,m,x)=\sum_{i=0}^n\binom{m+i}{i}x^i S_1(n,m+i)\,, 
$$ 
we obtain 
\begin{align*}  
c_{n,\rho,q}^{(k)}(z)&=\sum_{i=0}^n\sum_{m=i}^n S_1(n,m)(-\rho)^{n-m}\binom{m}{i}\frac{(-z)^i}{[m-i+1]_q^k}\\
&=\sum_{i=0}^n\sum_{m=i}^{n+i}S_1(n,m)(-\rho)^{n-m}\binom{m}{i}\frac{(-z)^i}{[m-i+1]_q^k}\\
&=\sum_{i=0}^n\sum_{m=0}^{n}S_1(n,m+i)(-\rho)^{n-m-i}\binom{m+i}{i}\frac{(-z)^i}{[m+1]_q^k}\\
&=\sum_{m=0}^n\frac{(-\rho)^{n-m}}{[m+1]_q^k}\sum_{i=0}^m\binom{m+i}{i}\left(\frac{z}{\rho}\right)^i S_1(n,m+i)\\
&=\sum_{m=0}^n S_1\left(n,m,\frac{z}{\rho}\right)\frac{(-\rho)^{n-m}}{[m+1]_q^k}\,.
\end{align*} 
\qed\end{proof} 

\begin{Cor}  
For $q$-poly-Cauchy numbers of the first kind with a parameter $\rho$, we have 
\begin{equation} 
c_{n,\rho,q}^{(k)}=\sum_{m=0}^\infty S_1(n,m)\frac{(-\rho)^{n-m}}{[m+1]_q^k}\,. 
\label{exp:qpcn1}
\end{equation} 
\end{Cor}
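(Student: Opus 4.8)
The plan is simply to specialize Theorem~\ref{thc1} at $z=0$. By the definition (\ref{def:qpc1}), the value $c_{n,\rho,q}^{(k)}(0)$ is exactly $c_{n,\rho,q}^{(k)}$, so it suffices to set $z=0$ on the right-hand side of (\ref{exp:qpc1}). Using the second of the two expressions given there, we get $c_{n,\rho,q}^{(k)}=\sum_{m=0}^n S_1(n,m,0)(-\rho)^{n-m}/[m+1]_q^k$, and since $S_1(n,m,0)=S_1(n,m)$ is the ordinary (unsigned) Stirling number of the first kind — as recorded right after (\ref{ws2}) — the weighted Stirling numbers collapse to the usual ones and the asserted formula drops out at once.

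One may equally well argue from the first expression in (\ref{exp:qpc1}): putting $z=0$ annihilates every summand with $i\ge 1$ through the factor $(-z)^i$, leaving only the $i=0$ contribution $\binom{m}{0}/[m-0+1]_q^k=1/[m+1]_q^k$, which again yields the claim. Finally, the upper summation limit may be written as $\infty$ because $S_1(n,m)=0$ whenever $m>n$, so in fact only finitely many terms are nonzero and the two forms of the sum agree. There is no genuine obstacle here — this is the most routine kind of corollary — and the only point worth a word is the harmless observation that specializing $z=0$ reduces the weighted Stirling numbers $S_1(n,m,z/\rho)$ to the ordinary Stirling numbers $S_1(n,m)$.
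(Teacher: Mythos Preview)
Your proof is correct and matches the paper's approach: the corollary is stated immediately after Theorem~\ref{thc1} with no explicit proof, the intended argument being precisely the specialization $z=0$ (compare the parallel Corollary after Theorem~\ref{thch1}, whose one-line proof reads ``Putting $z=0$ in Theorem~\ref{thch1}, we immediately get the result''). Your additional remarks---that either expression in (\ref{exp:qpc1}) works and that the upper limit $\infty$ is harmless since $S_1(n,m)=0$ for $m>n$---are correct and helpful elaborations.
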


Define the $q$-polyfactorial functions ${\rm Lif}_{k,q}(z)$ by 
$$
{\rm Lif}_{k,q}(z)=\sum_{n=0}^\infty\frac{z^n}{n![n+1]_q^k}\,. 
$$ 
Notice that 
$$
\lim_{q\to 1}{\rm Lif}_{k,q}(z)={\rm Lif}_k(z)\,,
$$
which is the ordinary polyfactorial function in (\ref{polyfac}).  
The generating function of $c_{n,\rho,q}^{(k)}$ is given by the following theorem. 

\begin{theorem}  
We have 
\begin{equation}  
\frac{1}{(1+\rho t)^{z/\rho}}{\rm Lif}_{k,q}\left(\frac{\ln(1+\rho t)}{\rho}\right)=\sum_{n=0}^\infty c_{n,\rho,q}^{(k)}(z)\frac{t^n}{n!}\,. 
\label{gen:qpc1}
\end{equation}  
\label{thc2} 
\end{theorem}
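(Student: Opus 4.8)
The plan is to deduce (\ref{gen:qpc1}) from the closed form already established in Theorem \ref{thc1}, combined with the generating function (\ref{ws1}) of the weighted Stirling numbers of the first kind. First I would substitute the second expression in (\ref{exp:qpc1}), namely $c_{n,\rho,q}^{(k)}(z)=\sum_{m=0}^n S_1(n,m,z/\rho)(-\rho)^{n-m}/[m+1]_q^k$, into the right-hand side of (\ref{gen:qpc1}) and interchange the two summations; this is harmless because $S_1(n,m,z/\rho)=0$ whenever $n<m$. This rewrites the series as
$$\sum_{n=0}^\infty c_{n,\rho,q}^{(k)}(z)\frac{t^n}{n!}=\sum_{m=0}^\infty\frac{(-\rho)^{-m}}{[m+1]_q^k}\sum_{n=0}^\infty S_1\!\left(n,m,\frac{z}{\rho}\right)\frac{(-\rho t)^n}{n!}\,.$$

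Next I would evaluate the inner sum by applying (\ref{ws1}) with $x=z/\rho$ and with $t$ replaced by $-\rho t$, which yields $\sum_{n\ge 0}S_1(n,m,z/\rho)(-\rho t)^n/n!=(1+\rho t)^{-z/\rho}\bigl(-\ln(1+\rho t)\bigr)^m/m!$. Substituting this back, the factor $(1+\rho t)^{-z/\rho}$ is independent of $m$ and pulls out of the sum, while the remaining $m$-dependent quantity simplifies via $\bigl(-\ln(1+\rho t)\bigr)^m/(-\rho)^m=\bigl(\ln(1+\rho t)/\rho\bigr)^m$ to $\frac{1}{m!\,[m+1]_q^k}\bigl(\ln(1+\rho t)/\rho\bigr)^m$. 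Summing over $m$ and recalling the definition of ${\rm Lif}_{k,q}$ then produces exactly $(1+\rho t)^{-z/\rho}{\rm Lif}_{k,q}\bigl(\ln(1+\rho t)/\rho\bigr)$, the left-hand side of (\ref{gen:qpc1}).

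There is no genuine obstacle here: the whole argument is a rearrangement of formal power series, and the only point that demands attention is the sign bookkeeping — one must check that the $(-\rho)^{-m}$ coming from $(-\rho)^{n-m}$ cancels against the $(-1)^m$ in $\bigl(-\ln(1+\rho t)\bigr)^m$, so that the argument of ${\rm Lif}_{k,q}$ ends up being $+\ln(1+\rho t)/\rho$ rather than its negative. As an alternative that bypasses Theorem \ref{thc1} entirely, one may start from (\ref{def:qpc1}) and use $\sum_{n\ge 0}\binom{u}{n}s^n=(1+s)^u$ with $u=(x_1\cdots x_k-z)/\rho$ and $s=\rho t$ to obtain $\sum_{n\ge 0}c_{n,\rho,q}^{(k)}(z)t^n/n!=\int_0^1\cdots\int_0^1(1+\rho t)^{(x_1\cdots x_k-z)/\rho}d_q x_1\cdots d_q x_k$, then expand the integrand as a power series in $\ln(1+\rho t)/\rho$ and integrate term by term using $\int_0^1 x^i\,d_q x=1/[i+1]_q$; this gives the same identity. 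I would present the first route, since it parallels the style of the proofs of Theorems \ref{thb1} and \ref{thc1}.
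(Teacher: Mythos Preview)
Your argument is correct and follows the same overall strategy as the paper---substitute the expression from Theorem~\ref{thc1}, interchange the order of summation, and invoke the exponential generating function for Stirling numbers of the first kind---but you make a cleaner choice at the first step. The paper inserts the \emph{first} identity of (\ref{exp:qpc1}) (the one with the ordinary $S_1(n,m)$ and the inner binomial sum over $i$), uses the unweighted generating function $\sum_n S_1(n,m)u^n/n!=\bigl(-\ln(1-u)\bigr)^m/m!$, and must then re-index the $i$-sum and recognize $\sum_i(-z\ln(1+\rho t)/\rho)^i/i!$ as $(1+\rho t)^{-z/\rho}$. You instead insert the \emph{second} identity (with the weighted $S_1(n,m,z/\rho)$) and apply (\ref{ws1}) directly, so the factor $(1+\rho t)^{-z/\rho}$ drops out in a single stroke. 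Your route is shorter and avoids the double sum; the paper's route has the minor virtue of using only unweighted Stirling numbers at that stage, but since (\ref{ws1}) is already available there is no reason to prefer it.

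Your alternative direct computation from the Jackson-integral definition (\ref{def:qpc1}) is also valid and would give an independent proof not passing through Theorem~\ref{thc1}; the paper does not take that route.
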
 

\begin{proof}  
By the first identity of Theorem \ref{thc1}
\begin{align*} 
\sum_{n=0}^\infty c_{n,\rho,q}^{(k)}(z)\frac{t^n}{n!}&=\sum_{n=0}^\infty\sum_{m=0}^n S_1(n,m)(-\rho)^{n-m}\sum_{i=0}^m\binom{m}{i}\frac{(-z)^i}{[m-i+1]_q^k}\frac{t^n}{n!}\\
&=\sum_{m=0}^\infty(-\rho)^{-m}\sum_{n=m}^\infty S_1(n,m)\frac{(-\rho t)^n}{n!}\sum_{i=0}^m\binom{m}{i}\frac{(-z)^i}{[m-i+1]_q^k}\\
&=\sum_{m=0}^\infty\frac{1}{m!}\left(\frac{\ln(1+\rho t)}{\rho}\right)^m\sum_{i=0}^m\binom{m}{i}\frac{(-z)^i}{[m-i+1]_q^k}\\
&=\sum_{i=0}^\infty\frac{(-z)^i}{i!}\sum_{m=i}^\infty\frac{1}{(m-i)![m-i+1]_q^k}\left(\frac{\ln(1+\rho t)}{\rho}\right)^m\\
&=\sum_{n=0}^\infty\frac{\bigl(\ln(1+\rho t)\bigr)^n}{n! \rho^n}\frac{1}{[n+1]_q^k}\sum_{i=0}^\infty\frac{1}{i!}\left(-\frac{z\ln(1+\rho t)}{\rho}\right)^i\\
&=\frac{1}{(1+\rho t)^{z/\rho}}\sum_{n=0}^\infty\frac{\bigl(\ln(1+\rho t)\bigr)^n}{\rho^n}\frac{1}{n![n+1]_q^k}\\
&=\frac{1}{(1+\rho t)^{z/\rho}}{\rm Lif}_{k,q}\left(\frac{\ln(1+\rho t)}{\rho}\right)\,. 
\end{align*}  
\qed\end{proof}

\section{$q$-poly-Cauchy polynomials of the second kind with a parameter}

Let $n$ and $k$ be integers with $n\ge 0$ and $k\ge 1$, and let $\rho$ be a real number parameter with $\rho\ne 0$. Define $q$-poly-Cauchy polynomials of the second kind $\widehat c_{n,\rho,q}^{(k)}(z)$ with a parameter $\rho$ by 
\begin{align}  
\widehat c_{n,\rho,q}^{(k)}(z)&=\rho^n\underbrace{\int_0^1\cdots\int_0^1}_k\left(\frac{-x_1\cdots x_k+z}{\rho}\right)_n d_q x_1\cdots d_q x_k\notag\\
&=\rho^n n!\underbrace{\int_0^1\cdots\int_0^1}_k\binom{(-x_1\cdots x_k+z)/\rho}{n}d_q x_1\cdots d_q x_k\,. 
\label{def:qpc2}
\end{align}

Notice that 
$$
\lim_{q\to 1}\widehat c_{n,\rho,q}^{(k)}(z)=\widehat c_{n,\rho}^{(k)}(z)\,, 
$$ 
which is the poly-Cauchy polynomial of the second kind with a $\rho$ parameter (\cite{Ko2}). In addition, when $z=0$, $\widehat c_{n,\rho}^{(k)}(0)=\widehat c_{n,\rho}^{(k)}$ is the poly-Cauchy number of the second kind with a $\rho$ parameter (\cite{Ko2}). When $z=0$ and $\rho=1$, $\widehat c_{n,1}(0)=\widehat c_n^{(k)}$ is the poly-Cauchy number (\cite{Ko1}) given in (\ref{polycauchy2}). 

The $q$-poly-Cauchy polynomials of the first kind $\widehat c_{n,\rho,q}^{(k)}(z)$ with a parameter can be expressed in terms of the weighted Stirling numbers of the first kind $S_1(n,m,x)$.  In this expression, $k$ is not restricted to positive integers. 

\begin{theorem}  
For integers $n$ and $k$ with $n\ge 0$, we have 
\begin{align} 
\widehat c_{n,\rho,q}^{(k)}(z)&=(-1)^n\sum_{m=0}^n S_1(n,m)\rho^{n-m}\sum_{i=0}^m\binom{m}{i}\frac{(-z)^i}{[m-i+1]_q^k}\notag\\
&=(-1)^n\sum_{m=0}^n S_1\left(n,m,-\frac{z}{\rho}\right)\frac{\rho^{n-m}}{[m+1]_q^k}\,.  
\label{exp:qpc2}
\end{align}  
\label{thch1} 
\end{theorem}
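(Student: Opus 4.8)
The plan is to follow the proof of Theorem \ref{thc1} almost verbatim; the only genuinely new point is the extra sign bookkeeping caused by replacing the integrand argument $x_1\cdots x_k-z$ of the first kind with $-x_1\cdots x_k+z$.

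First I would start from the definition (\ref{def:qpc2}) and expand the falling factorial via (\ref{def:signedst1}), using $(x)_n=x(x-1)\cdots(x-n+1)=\sum_{m=0}^n(-1)^{n-m}S_1(n,m)x^m$, so that $\widehat c_{n,\rho,q}^{(k)}(z)=\rho^n\sum_{m=0}^n(-1)^{n-m}S_1(n,m)\int_0^1\cdots\int_0^1\bigl((-x_1\cdots x_k+z)/\rho\bigr)^m d_q x_1\cdots d_q x_k$. Then I would apply the binomial theorem, $(z-x_1\cdots x_k)^m=\sum_{i=0}^m\binom{m}{i}z^i(-1)^{m-i}(x_1\cdots x_k)^{m-i}$, pull the factor $\rho^{-m}$ outside, and evaluate each Jackson integral by $\int_0^1 t^j d_q t=1/[j+1]_q$, so that the $k$-fold integral of $(x_1\cdots x_k)^{m-i}$ equals $1/[m-i+1]_q^k$. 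Collecting constants, $\rho^n\rho^{-m}=\rho^{n-m}$ and $(-1)^{n-m}(-1)^{m-i}z^i=(-1)^{n-i}z^i=(-1)^n(-z)^i$, which gives the first displayed identity. Once this closed form is in hand the symbol $[m-i+1]_q^k$ makes sense for arbitrary $k$, which is why the expression remains valid without restricting $k$ to a positive integer even though the defining integral requires $k\ge 1$.

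For the second identity I would reuse exactly the reindexing from the proof of Theorem \ref{thc1}: interchange the order of summation to write the double sum as $\sum_{i=0}^n(-z)^i\sum_{m=i}^n S_1(n,m)\rho^{n-m}\binom{m}{i}/[m-i+1]_q^k$, enlarge the inner range to $m\le n+i$ (harmless since $S_1(n,m)=0$ for $m>n$), substitute $m\mapsto m+i$, and then identify the resulting inner sum over $i$ through the weighted-Stirling relation $S_1(n,m,x)=\sum_i\binom{m+i}{i}x^i S_1(n,m+i)$ used in the proof of Theorem \ref{thc1}, this time taken at $x=-z/\rho$. Restoring the overall factor $(-1)^n$ then yields $(-1)^n\sum_{m=0}^n S_1(n,m,-z/\rho)\rho^{n-m}/[m+1]_q^k$, as claimed.

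I do not expect any serious obstacle, since the whole computation is formal. The only spot deserving care is the sign tracking: verifying that the effective substitution $z\mapsto -z$ relative to the first-kind calculation is precisely what forces the weighted Stirling parameter to be $-z/\rho$ rather than $z/\rho$, and that the factor $(-1)^n$ can be pulled cleanly out of every summand.
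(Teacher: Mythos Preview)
Your proposal is correct and follows essentially the same route as the paper: the paper merely states that the result follows from the definition (\ref{def:qpc2}), the expansion (\ref{def:signedst1}), and Carlitz's relation \cite[Eq.~(5.2)]{Ca2}, ``similarly to the proof of Theorem~\ref{thc1}'', and what you have written is exactly that computation carried out in full with the sign bookkeeping made explicit.
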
 
\begin{proof}
From (\ref{def:qpc1}) and (\ref{def:signedst1}), and using the relation \cite[Eq. (5.2)]{Ca2}, similarly to the proof of Theorem \ref{thc1}, we obtain the result. 
\qed\end{proof}    

\begin{Cor}  
For $q$-poly-Cauchy numbers of the second kind with a parameter $\rho$, we have 
$$
\widehat c_{n,\rho,q}^{(k)}=(-1)^n\sum_{m=0}^\infty S_1(n,m)\frac{\rho^{n-m}}{[m+1]_q^k}\,. 
$$
\end{Cor}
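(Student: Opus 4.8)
The plan is to obtain this corollary immediately as the $z=0$ specialization of Theorem \ref{thch1}. Setting $z=0$ in the defining integral (\ref{def:qpc2}) gives $\widehat c_{n,\rho,q}^{(k)}(0)=\widehat c_{n,\rho,q}^{(k)}$, so it suffices to evaluate the right-hand side of (\ref{exp:qpc2}) at $z=0$.

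The cleanest route I would take uses the second identity of Theorem \ref{thch1},
$$
\widehat c_{n,\rho,q}^{(k)}(z)=(-1)^n\sum_{m=0}^n S_1\left(n,m,-\frac{z}{\rho}\right)\frac{\rho^{n-m}}{[m+1]_q^k}\,.
$$
Putting $z=0$ and recalling from the remark following (\ref{ws2}) that $S_1(n,m,0)=S_1(n,m)$, the weight parameter disappears and each coefficient reduces to the ordinary (unsigned) Stirling number of the first kind. This yields
$$
\widehat c_{n,\rho,q}^{(k)}=(-1)^n\sum_{m=0}^n S_1(n,m)\frac{\rho^{n-m}}{[m+1]_q^k}\,.
$$

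Finally, since $S_1(n,m)=0$ for $m>n$ by the degree-$n$ rising-factorial definition (\ref{def:signedst1}), extending the upper summation limit from $n$ to $\infty$ leaves the value unchanged and produces exactly the stated form. As a consistency check one may instead specialize the first identity of Theorem \ref{thch1}: under the convention $0^0=1$ the inner sum $\sum_{i=0}^m\binom{m}{i}(-z)^i/[m-i+1]_q^k$ collapses at $z=0$ to its $i=0$ term $1/[m+1]_q^k$, giving the same expression. I expect no real obstacle here; the only minor point requiring a word of justification is the vanishing of $S_1(n,m)$ for $m>n$, which is what allows the summation to be written up to $\infty$.
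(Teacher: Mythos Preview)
Your proposal is correct and matches the paper's own proof, which simply reads ``Putting $z=0$ in Theorem \ref{thch1}, we immediately get the result.'' You have merely made explicit the two small points the paper leaves implicit: that $S_1(n,m,0)=S_1(n,m)$ and that $S_1(n,m)=0$ for $m>n$ justifies writing the upper limit as $\infty$.
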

\begin{proof}
Putting $z=0$ in Theorem \ref{thch1}, we immediately get the result. 
\qed\end{proof}  

The generating function of $\widehat c_{n,\rho,q}^{(k)}$ is given by using the $q$-polyfactorial functions ${\rm Lif}_{k,q}(z)$. 

\begin{theorem}  
We have 
$$
(1+\rho t)^{z/\rho}{\rm Lif}_{k,q}\left(-\frac{\ln(1+\rho t)}{\rho}\right)=\sum_{n=0}^\infty\widehat c_{n,\rho,q}^{(k)}(z)\frac{t^n}{n!}\,. 
$$ 
\label{thch2} 
\end{theorem}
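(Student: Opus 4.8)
The plan is to follow the same route as in the proof of Theorem \ref{thc2}, starting from the explicit formula in Theorem \ref{thch1} rather than directly from the Jackson integral. First I would take the first identity of Theorem \ref{thch1},
$$\widehat c_{n,\rho,q}^{(k)}(z)=(-1)^n\sum_{m=0}^n S_1(n,m)\rho^{n-m}\sum_{i=0}^m\binom{m}{i}\frac{(-z)^i}{[m-i+1]_q^k}\,,$$
multiply by $t^n/n!$, sum over $n\ge 0$, and interchange the order of summation so that the sum over $n$ becomes the innermost one. The key observation is that $(-1)^n\rho^{n-m}t^n=\rho^{-m}(-\rho t)^n$, so the inner sum becomes $\rho^{-m}\sum_{n\ge m}S_1(n,m)(-\rho t)^n/n!$.

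Next I would use the exponential generating function of the (unsigned) Stirling numbers of the first kind, $\sum_{n\ge m}S_1(n,m)x^n/n!=\bigl(-\ln(1-x)\bigr)^m/m!$, with $x=-\rho t$, which yields $\rho^{-m}\sum_{n\ge m}S_1(n,m)(-\rho t)^n/n!=\frac{1}{m!}\bigl(-\ln(1+\rho t)/\rho\bigr)^m$. Writing $u=-\ln(1+\rho t)/\rho$ for brevity, the double sum collapses to $\sum_{m\ge 0}\frac{u^m}{m!}\sum_{i=0}^m\binom{m}{i}\frac{(-z)^i}{[m-i+1]_q^k}$. Then, exactly as in the proof of Theorem \ref{thc2}, I would use $\binom{m}{i}/m!=1/\bigl(i!(m-i)!\bigr)$, swap the $m$- and $i$-sums, and substitute $n=m-i$ to factor the expression as $\Bigl(\sum_{i\ge 0}\frac{(-zu)^i}{i!}\Bigr)\Bigl(\sum_{n\ge 0}\frac{u^n}{n![n+1]_q^k}\Bigr)=e^{-zu}\,{\rm Lif}_{k,q}(u)$.

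Finally I would observe that $e^{-zu}=\exp\bigl((z/\rho)\ln(1+\rho t)\bigr)=(1+\rho t)^{z/\rho}$, which gives the claimed identity. There is no real obstacle here beyond bookkeeping: the only points that need care are the sign changes relative to Theorem \ref{thc2} (the factor $(-1)^n$ together with $\rho^{n-m}$ in place of $(-\rho)^{n-m}$, which is precisely what turns $\ln(1+\rho t)/\rho$ into its negative inside ${\rm Lif}_{k,q}$), the legitimacy of the interchanges of summation (handled formally as power-series identities, or by absolute convergence for small $t$), and the convention $S_1(n,m)=0$ for $n<m$, which makes the reindexing $n=m-i$ valid.

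Alternatively, one could bypass Theorem \ref{thch1} and compute the generating function directly from the definition (\ref{def:qpc2}): expand the falling factorial via (\ref{def:signedst1}), evaluate the iterated Jackson integrals termwise using $\int_0^1 x^j\,d_q x=1/[j+1]_q$, and then run the same bilateral-summation argument. This produces the same series and is the $q$-analogue of the passage from (\ref{def:qpc1}) to (\ref{polycauchy}) in the classical case; I would mention this route only briefly, since the first approach is shorter given the tools already developed.
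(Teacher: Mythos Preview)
Your proposal is correct and follows exactly the route the paper indicates: the paper's own proof is a one-line pointer (``Similarly to the proof of Theorem \ref{thc2}, by the first identity \dots''), and what you wrote is precisely the computation one obtains by carrying out that analogy, with the sign bookkeeping handled correctly. The alternative direct-integral route you mention is also fine but, as you note, redundant once Theorem \ref{thch1} is available.
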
 
\begin{proof}
Similarly to the proof of Theorem \ref{thc2}, by the first identity of Theorem \ref{thc1}, we obtain the result. 
\qed\end{proof}

\section{Several relations of $q$-poly-Bernoulli polynomials and $q$-poly-Cauchy polynomials}

There exist orthogonality and inverse relations for weighted Stirling numbers (\cite{Ca2}). Namely, from the orthogonal relations 
$$
\sum_{l=m}^n(-1)^{n-l}S_2(n,l,x)S_1(l,m,x)=\sum_{l=m}^n(-1)^{l-m}S_1(n,l,x)S_2(l,m,x)=\delta_{m,n}\,, 
$$ 
where $\delta_{m,n}=1$ if $m=n$; $\delta_{m,n}=0$ otherwise,  
we obtain the inverse relations 
\begin{equation} 
f_n=\sum_{m=0}^n(-1)^{n-m}S_1(n,m,x)g_m\quad\Longleftrightarrow\quad g_n=\sum_{m=0}^n S_2(n,m,x)f_m\,. 
\label{inverse} 
\end{equation}  

\begin{theorem} 
For $q$-poly-Bernoulli and $q$-poly-Cauchy polynomials with a parameter, we have
\begin{align} 
\sum_{m=0}^n S_1\left(n,m,\frac{z}{\rho}\right)\rho^{-m}B_{m,\rho,q}^{(k)}(z)&=\frac{n!}{\rho^n[n+1]_q^k}\,,
\label{201}\\
\sum_{m=0}^n S_2\left(n,m,\frac{z}{\rho}\right)\rho^{-m}c_{m,\rho,q}^{(k)}(z)&=\frac{1}{\rho^n[n+1]_q^k}\,,
\label{202}\\
\sum_{m=0}^n S_2\left(n,m,-\frac{z}{\rho}\right)\rho^{-m}\widehat c_{m,\rho,q}^{(k)}(z)&=\frac{(-1)^n}{\rho^n[n+1]_q^k}\,.
\label{203}
\end{align}
\label{th20}
\end{theorem}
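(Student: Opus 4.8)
The plan is to obtain each of the three identities as an inversion of the corresponding explicit weighted-Stirling formula established earlier, using the inverse pair (\ref{inverse}). Throughout I would write $x=z/\rho$.

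For (\ref{201}) I would start from Theorem~\ref{thb1}, divide by $\rho^m$ to get $\rho^{-m}B_{m,\rho,q}^{(k)}(z)=\sum_{l=0}^m S_2(m,l,x)(-1)^{m-l}\rho^{-l}l!/[l+1]_q^k$, then multiply through by $(-1)^m$ and use $(-1)^m(-1)^{m-l}=(-1)^l$, so that $g_m:=(-1)^m\rho^{-m}B_{m,\rho,q}^{(k)}(z)$ satisfies $g_m=\sum_{l=0}^m S_2(m,l,x)f_l$ with $f_l:=(-1)^l\rho^{-l}l!/[l+1]_q^k$. By (\ref{inverse}) this forces $f_n=\sum_{m=0}^n(-1)^{n-m}S_1(n,m,x)g_m$; unwinding the definitions of $f_n$ and $g_m$ and cancelling the common factor $(-1)^n$ would yield (\ref{201}).

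For (\ref{202}), dividing the formula of Theorem~\ref{thc1} by $\rho^m$ already produces the left-hand half of (\ref{inverse}), namely $f_m=\sum_{l=0}^m(-1)^{m-l}S_1(m,l,x)g_l$ with $f_m:=\rho^{-m}c_{m,\rho,q}^{(k)}(z)$ and $g_l:=\rho^{-l}/[l+1]_q^k$, so its companion relation is exactly (\ref{202}). For (\ref{203}) I would divide the formula of Theorem~\ref{thch1} by $\rho^m$, absorb the factor $(-1)^m$ via $(-1)^m\rho^{-l}=(-1)^{m-l}(-\rho)^{-l}$ to reach $f_m=\sum_{l=0}^m(-1)^{m-l}S_1(m,l,-x)g_l$ with $f_m:=\rho^{-m}\widehat c_{m,\rho,q}^{(k)}(z)$ and $g_l:=(-\rho)^{-l}/[l+1]_q^k$, and then apply (\ref{inverse}) with parameter $-x=-z/\rho$; since $(-\rho)^{-n}=(-1)^n\rho^{-n}$, the companion relation becomes (\ref{203}). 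Equivalently, one could substitute the explicit formulas directly into the left-hand sides of (\ref{201})--(\ref{203}), swap the order of summation, and collapse the inner sum over the middle index by the stated orthogonality relations (with $S_1$ and $S_2$ in the appropriate order), the Kronecker delta leaving only the diagonal term.

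The only point requiring care --- and where I would most expect an error to creep in --- is the sign and parameter bookkeeping: reconciling the prefactors $(-\rho)^{m-l}$, $\rho^{m-l}$ and $(-1)^m$ that appear in Theorems~\ref{thb1}, \ref{thc1} and \ref{thch1} with the rigid shape $(-1)^{n-m}S_1(n,m,\cdot)$ of (\ref{inverse}), and tracking whether the weighted Stirling numbers should carry parameter $z/\rho$ or $-z/\rho$. Once the correct $f$, $g$ and parameter are identified, each identity is a single application of (\ref{inverse}).
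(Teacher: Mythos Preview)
Your proposal is correct and follows essentially the same approach as the paper: each identity is obtained by recasting the explicit weighted-Stirling expressions of Theorems~\ref{thb1}, \ref{thc1} and \ref{thch1} into the form of one half of the inverse pair (\ref{inverse}) and reading off the companion relation. Your choices of $f_m$ and $g_m$ (e.g., $g_m=B_{m,\rho,q}^{(k)}(z)/(-\rho)^m$ and $f_m=m!/\bigl((-\rho)^m[m+1]_q^k\bigr)$ for (\ref{201})) coincide with the paper's, and your sign and parameter bookkeeping is correct.
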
  

\noindent 
{\it Remark.}  
If $q\to 1$, then Theorem \ref{th20} is reduced to Theorem 3.2 in \cite{KC}.  

\begin{proof}  
By Theorem \ref{thb1}, applying (\ref{inverse}) with 
$$
f_m=\frac{m!}{(-\rho)^m[m+1]_q^k},\quad g_n=\frac{B_{n,\rho,q}^{(k)}(z)}{(-\rho)^n}
$$ 
and $x$ is replaced by $z/\rho$, 
we get the identity (\ref{201}).
Similarly, by Theorem \ref{thc1} and Theorem \ref{thch1} we have the identities (\ref{202}) and (\ref{203}), respectively. 
\qed\end{proof} 

There are relations between two kinds of $q$-poly-Cauchy polynomials with a parameter.  

\begin{theorem}  
For $n\ge 1$ we have 
\begin{align}  
(-1)^n\frac{c_{n,\rho,q}^{(k)}(z)}{n!}&=\sum_{m=1}^n\binom{n-1}{m-1}\frac{\widehat c_{m,\rho,q}^{(k)}(z)}{m!}\,,
\label{301}\\
(-1)^n\frac{\widehat c_{n,\rho,q}^{(k)}(z)}{n!}&=\sum_{m=1}^n\binom{n-1}{m-1}\frac{c_{m,\rho,q}^{(k)}(z)}{m!}\,. 
\label{302} 
\end{align}  
\label{th30}
\end{theorem}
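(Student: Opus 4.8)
The plan is to derive both identities from the generating functions of Theorems~\ref{thc2} and \ref{thch2} by a single substitution. Write $F(t)=\sum_{n\ge0}c_{n,\rho,q}^{(k)}(z)\,t^n/n!$ and $\widehat F(t)=\sum_{n\ge0}\widehat c_{n,\rho,q}^{(k)}(z)\,t^n/n!$, and set $s=-t/(1+\rho t)$, a formal power series in $t$ with zero constant term. One checks at once that $1+\rho s=(1+\rho t)^{-1}$, so $\ln(1+\rho s)/\rho=-\ln(1+\rho t)/\rho$ and $(1+\rho s)^{z/\rho}=(1+\rho t)^{-z/\rho}$ as formal power series. Substituting these into the generating function of Theorem~\ref{thc2} (read with $s$ in place of the variable) converts it into the generating function of Theorem~\ref{thch2}, i.e. $F(s)=\widehat F(t)$. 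Since $t\mapsto -t/(1+\rho t)$ is an involution (a second application returns $t$), the same reasoning gives $\widehat F(s)=F(t)$ as well. Thus (\ref{301}) and (\ref{302}) are mirror images of each other, and it suffices to extract one of them.

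To finish, I would expand $F(s)=\sum_{n\ge0}\frac{c_{n,\rho,q}^{(k)}(z)}{n!}\left(\frac{-t}{1+\rho t}\right)^n$ as a series in $t$. From $(1+\rho t)^{-n}=\sum_{j\ge0}\binom{-n}{j}(\rho t)^j$ and $\binom{-n}{j}=(-1)^j\binom{n+j-1}{j}$ one obtains $\left(\frac{-t}{1+\rho t}\right)^n=\sum_{j\ge0}(-1)^{n+j}\binom{n+j-1}{j}\rho^j t^{n+j}$. Collecting the coefficient of $t^N$, with $j=N-n$ so that $\binom{n+j-1}{j}=\binom{N-1}{N-n}=\binom{N-1}{n-1}$, and equating it with the coefficient $\widehat c_{N,\rho,q}^{(k)}(z)/N!$ of $t^N$ in $\widehat F(t)$, gives the asserted identity; the term $n=0$ drops out for $N\ge1$ since $\binom{N-1}{N}=0$, which is why the summation starts at $m=1$. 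Interchanging the roles of $c$ and $\widehat c$ (equivalently, applying the involution once more) yields the companion identity.

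The whole computation is elementary; the only point requiring care is the bookkeeping of the signs and of the powers of $\rho$ and $t$ coming out of the binomial series of $(1+\rho t)^{-n}$, together with the re-indexing $j=N-n$, $m=n$ that turns $\binom{n+j-1}{j}$ into $\binom{n-1}{m-1}$. A second approach, closer to the argument used for Theorem~\ref{th20}, is to insert the Stirling-number expansions of Theorems~\ref{thc1} and \ref{thch1} into both sides: the binomial transform $\sum_m\binom{n-1}{m-1}(\cdot)$ corresponds to the substitution $t\mapsto t/(1-t)$ at the level of generating functions, so this route comes down to a convolution identity for the weighted Stirling numbers of the first kind of the type recorded by Carlitz~\cite{Ca2}. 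I expect the generating-function argument to be the shorter of the two and would present it as the main proof.
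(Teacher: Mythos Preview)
Your route via the generating functions of Theorems~\ref{thc2} and \ref{thch2} and the involution $t\mapsto -t/(1+\rho t)$ is genuinely different from the paper's argument: the paper works directly from the Jackson-integral definition (\ref{def:qpc2}), rewrites $(-1)^n\binom{-X}{n}=\binom{X+n-1}{n}$ with $X=(x_1\cdots x_k-z)/\rho$, expands by the Vandermonde convolution $\binom{X+n-1}{n}=\sum_m\binom{X}{m}\binom{n-1}{n-m}$, and integrates term by term. Your approach has the advantage of explaining transparently why the two identities are exchanged by a single substitution (the involution), whereas the paper's is more self-contained in that it does not rely on Theorems~\ref{thc2} and \ref{thch2}.

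That said, your final step contains a gap you should not wave past. When you collect the coefficient of $t^N$ in
\[
\sum_{n\ge 0}\frac{c_{n,\rho,q}^{(k)}(z)}{n!}\,(-1)^{n+j}\binom{n+j-1}{j}\rho^{\,j}t^{n+j}
\]
with $j=N-n$, the factor $\rho^{\,j}=\rho^{\,N-n}$ does not disappear. What you actually obtain is
\[
(-1)^N\frac{\widehat c_{N,\rho,q}^{(k)}(z)}{N!}=\sum_{m=1}^{N}\binom{N-1}{m-1}\rho^{\,N-m}\,\frac{c_{m,\rho,q}^{(k)}(z)}{m!}\,,
\]
and likewise a factor $\rho^{\,n-m}$ in the companion identity. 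This does \emph{not} match (\ref{301})--(\ref{302}) as printed, which coincide with your formula only when $\rho=1$. In fact the paper's own computation has the same feature: since by (\ref{def:qpc1}) one has $\rho^n\int\cdots\int\binom{X}{m}\,d_qx_1\cdots d_qx_k=\rho^{\,n-m}\,c_{m,\rho,q}^{(k)}(z)/m!$, the last displayed line of the paper's proof should also carry $\rho^{\,n-m}$. A quick check at $n=2$, $z=0$ using the expressions in Theorems~\ref{thc1} and \ref{thch1} confirms that (\ref{302}) fails for $\rho\ne 1$ without this factor. So carry the coefficient extraction out in full and record that (\ref{301})--(\ref{302}) require the extra factor $\rho^{\,n-m}$ for general~$\rho$.
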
  

\noindent 
{\it Remark.} 
Since $c_{n,1,q}^{(k)}(z)\to c_n^{(k)}(z)$ and $\widehat c_{n,1,q}^{(k)}(z)\to\widehat c_n^{(k)}(z)$ as $q\to 1$, Theorem \ref{th30} is reduced to Theorem 4.2 in \cite{KK}.  

\begin{proof}  
We shall prove identity (\ref{302}). The identity (\ref{301}) can be proven similarly. By the definition of $\widehat c_{n,q}^{(k)}(z)$ 
\begin{align*}  
(-1)^n\frac{\widehat c_{n,\rho,q}^{(k)}(z)}{n!}&=(-\rho)^n\underbrace{\int_0^1\cdots\int_0^1}_k\binom{(-x_1\cdots x_k+z)/\rho}{n}d_q x_1\dots d_q x_k\\
&=\rho^n\underbrace{\int_0^1\cdots\int_0^1}_k\binom{(x_1\cdots x_k-z)/\rho+(n-1)}{n}d_q x_1\dots d_q x_k\,.
\end{align*} 
We use the well-known identity, see \cite[p. 8]{Rio},   
\begin{equation}  
\binom{x+y}{n}=\sum_{l=0}^n\binom{x}{l}\binom{y}{n-l}
\label{eq:62}  
\end{equation} 
as $x=(x_1\cdots x_k-z)/\rho$ and $y=n-1$.  
Then 
\begin{align*} 
(-1)^n\frac{\widehat c_{n,\rho,q}^{(k)}(z)}{n!}&=\rho^n\underbrace{\int_0^1\cdots\int_0^1}_k\sum_{m=0}^l\binom{(x_1\cdots x_k-z)/\rho}{m}\binom{n-1}{n-m}d_q x_1\dots d_q x_k\\
&=\sum_{m=0}^n\binom{n-1}{m-1}\rho^n\underbrace{\int_0^1\cdots\int_0^1}_k\binom{(x_1\cdots x_k-z)/\rho}{m}d_q x_1\dots d_q x_k\\
&=\sum_{m=1}^n\binom{n-1}{m-1}\frac{c_{m,\rho,q}^{(k)}(z)}{m!}\,. 
\end{align*} 
Note that $\binom{n-1}{-1}=0$. 
\qed\end{proof} 

\begin{theorem}
For any $x$ and $y$ we have
\begin{align*}
B_{n,\rho,q}^{(k)}(x)
&=\sum_{l=0}^{n}\sum_{m=0}^{n}(-1)^{n-m}m!\rho^{n-l}S_{2}\left(n,m,\frac{x}{\rho}\right)S_{2}\left(m,l,\frac{y}{\rho}\right)c_{l,\rho,q}^{(k)}(y)\,,\\
B_{n,\rho,q}^{(k)}(x)
&=\sum_{l=0}^{n}\sum_{m=0}^{n}(-1)^{n}m!\rho^{n-l}S_{2}\left(n,m,\frac{x}{\rho}\right)S_{2}\left(m,l,-\frac{y}{\rho}\right)\widehat c_{l,\rho,q}^{(k)}(y)\,,\\
c_{n,\rho,q}^{(k)}(x)  
&=\sum_{l=0}^{n}\sum_{m=0}^{n}
\frac{(-1)^{n-m}}{m!}\rho^{n-l}S_{1}\left(n,m,\frac{x}{\rho}\right)S_{1}\left(m,l,\frac{y}{\rho}\right)B_{l,\rho,q}^{(k)}(y)\,,\\
\widehat{c}_{n,\rho,q}^{(k)}(x)
&=\sum_{l=0}^{n}\sum_{m=0}^{n}\frac{(-1)^{n}}{m!}
\rho^{n-l}S_{1}\left(n,m,-\frac{x}{\rho}\right)S_{1}\left(m,l,\frac{y}{\rho}\right)B_{l,\rho,q}^{(k)}(y)\,.
\end{align*}
\label{th50} 
\end{theorem}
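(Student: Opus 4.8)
The plan is to handle all four identities in the same way, by observing that Theorems~\ref{thb1}, \ref{thc1} and \ref{thch1} each express the polynomial on the left as a weighted-Stirling transform of one of the two ``atoms'' $m!/[m+1]_q^k$ (on the Bernoulli side) or $1/[m+1]_q^k$ (on the Cauchy side), while the three identities (\ref{201})--(\ref{203}) of Theorem~\ref{th20}, read backwards, re-expand exactly these atoms in terms of the \emph{other} family. So for each identity I would substitute the appropriate backward form of (\ref{201})--(\ref{203}) into the appropriate direct formula, interchange the two finite summations, merge the powers of $\pm\rho$, and collect.

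Concretely, for the first identity I would rewrite (\ref{202}) as
\[
\frac{1}{[n+1]_q^k}=\sum_{m=0}^n S_2\!\left(n,m,\frac{y}{\rho}\right)\rho^{n-m}c_{m,\rho,q}^{(k)}(y),
\]
use it with $n$ replaced by $m$ inside the expansion of $B_{n,\rho,q}^{(k)}(x)$ supplied by Theorem~\ref{thb1}, and combine $(-\rho)^{n-m}$ with $\rho^{m-l}$ into $(-1)^{n-m}\rho^{n-l}$. Extending the inner sum from $l\le m$ to $l\le n$ is harmless since $S_2(m,l,\cdot)=0$ for $l>m$, and this produces the stated double sum. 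The second identity comes out the same way from (\ref{203}) in the form $1/[n+1]_q^k=(-1)^n\sum_{m}S_2(n,m,-y/\rho)\rho^{n-m}\widehat c_{m,\rho,q}^{(k)}(y)$, where the extra $(-1)^m$ merges with $(-\rho)^{n-m}$ from Theorem~\ref{thb1} to give the overall sign $(-1)^n$. For the third and fourth identities I would instead read (\ref{201}) as $m!/[m+1]_q^k=\sum_{l}S_1(m,l,y/\rho)\rho^{m-l}B_{l,\rho,q}^{(k)}(y)$, hence $1/[m+1]_q^k=(1/m!)\sum_{l}S_1(m,l,y/\rho)\rho^{m-l}B_{l,\rho,q}^{(k)}(y)$, and substitute this into the second form of Theorem~\ref{thc1} (for the third identity) and of Theorem~\ref{thch1} (for the fourth), once more merging the powers of $\rho$ and widening the inner range via the vanishing of $S_1(m,l,\cdot)$ for $l>m$.

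There is no conceptually hard step here: the content is precisely that Theorem~\ref{th20} inverts Theorems~\ref{thb1}, \ref{thc1}, \ref{thch1} through the common intermediate quantities $m!/[m+1]_q^k$ and $1/[m+1]_q^k$. The only place care is needed --- and the one spot where a sign or a stray power of $\rho$ could slip in --- is the bookkeeping of the exponents of $-1$ and $\rho$ when two Stirling expansions are composed, together with checking that the $1/m!$ introduced in the Cauchy-from-Bernoulli direction is exactly the $1/m!$ appearing in the target formulas. As a consistency check one may take $y=x$ in, say, the first and third identities and compose them; the result must collapse via the weighted-Stirling orthogonality behind (\ref{inverse}).
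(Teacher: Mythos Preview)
Your proposal is correct and is essentially the same argument as the paper's: both combine the Stirling-number expansions of Theorems~\ref{thb1}, \ref{thc1}, \ref{thch1} with the inverse relations (\ref{201})--(\ref{203}) of Theorem~\ref{th20}, the only cosmetic difference being that the paper starts from the double sum and collapses the inner summation via (\ref{201})/(\ref{202}) to recognize the single-sum formula, whereas you start from the single-sum formula and expand the atom $1/[m+1]_q^k$ (or $m!/[m+1]_q^k$) outward. The sign and power-of-$\rho$ bookkeeping you describe is accurate, and the range extension via $S_i(m,l,\cdot)=0$ for $l>m$ is exactly what the paper uses implicitly.
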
  

\noindent 
{\it Remark.}  
If $\rho=1$ and $q\to 1$, then Theorem \ref{th50} is reduced to Theorem 3.3 in \cite{KC}. 
If $\rho=1$, $x=y$ and $q\to 1$, then Theorem \ref{th50} is reduced to Theorem 4.1 in \cite{KL}. A different generalization without Jackson's integrals is discussed in \cite{KLS}.  

\begin{proof}  
We shall prove the first and the fourth identities. Others can be proven similarly. By (\ref{202}) in Theorem \ref{th20}, and using (\ref{exp:qpb}), we have 
\begin{align*}  
&\sum_{l=0}^{n}\sum_{m=0}^{n}(-1)^{n-m}m!\rho^{n-l}S_{2}\left(n,m,\frac{x}{\rho}\right)S_{2}\left(m,l,\frac{y}{\rho}\right)c_{l,\rho,q}^{(k)}(y)\\
&=\sum_{m=0}^n(-1)^{n-m}m!\rho^n S_2\left(n,m,\frac{x}{\rho}\right)\sum_{l=0}^m S_2\left(m,l,\frac{y}{\rho}\right)\rho^{-l} c_{l,\rho,q}^{(k)}(y)\\
&=\sum_{m=0}^n S_2\left(n,m,\frac{x}{\rho}\right)\frac{(-\rho)^{n-m}m!}{[m+1]_q^k}\\
&=B_{n,\rho,q}^{(k)}(x)\,. 
\end{align*} 
By (\ref{201}) in Theorem \ref{th20}, and using (\ref{exp:qpc2}), we have 
\begin{align*}  
&\sum_{l=0}^{n}\sum_{m=0}^{n}\frac{(-1)^{n}}{m!}
\rho^{n-l}S_{1}\left(n,m,-\frac{x}{\rho}\right)S_{1}\left(m,l,\frac{y}{\rho}\right)B_{l,\rho,q}^{(k)}(y)\\
&=\sum_{m=0}^n\frac{(-1)^n}{m!}\rho^n S_1\left(n,m,-\frac{x}{\rho}\right)\sum_{l=0}^m S_1\left(m,l,\frac{y}{\rho}\right)\rho^{-l}B_{l,\rho,q}^{(k)}(y)\\
&=(-1)^n\sum_{m=0}^n S_1\left(n,m,-\frac{x}{\rho}\right)\frac{\rho^{n-m}}{[m+1]_q^k}\\
&=\widehat c_{n,\rho,q}^{(k)}(x)\,. 
\end{align*} 
\qed\end{proof}

\section{Acknowledgement}  

This work was supported in part by the grant of Wuhan University and by the grant of Hubei Provincial Experts Program. 

\end{document}